\documentclass{article}
\usepackage{graphicx} 

\usepackage{geometry}
\geometry{margin=1.5in}

\usepackage{ dsfont }
\usepackage[utf8]{inputenc}
\usepackage{amsmath,amssymb,amsthm}
\usepackage[all]{xy} 
\usepackage{ mathrsfs }
\usepackage{ stmaryrd }
\usepackage{ upgreek }
\usepackage{comment}

\usepackage{fancyhdr}

\usepackage{setspace}

\usepackage{dirtytalk}

\usepackage{faktor}

\usepackage[shortlabels]{enumitem}

\usepackage{xcolor}

\usepackage[hyperfootnotes=false]{hyperref}

\hypersetup{
  colorlinks=true,
  linkcolor=blue,
  linkbordercolor=blue,
  citebordercolor=green
}

\usepackage{tikz}
\usetikzlibrary{shapes.geometric,fit}
\usetikzlibrary{arrows.meta}
\usepackage{tkz-euclide}
\usepackage{tikz-cd}

\usetikzlibrary{calc,trees,positioning,arrows,chains,shapes.geometric,shapes,shadows,matrix}
\usepackage{blkarray}
\usepackage{wrapfig}
\usetikzlibrary{tikzmark}
\usepackage{float}
\usepackage{graphicx}

\usepackage{array}
\newcolumntype{C}{>{\centering\arraybackslash}p{1.2cm}}

\usepackage{url}

\usepackage{comment}

\usepackage{esvect} 

\usepackage{caption} 

\usepackage{longtable}

\usepackage{changepage}

\usepackage{biblatex}
\addbibresource{mybibliography.bib}

\newtheorem{theorem}{Theorem}[section]
\newtheorem{lemma}[theorem]{Lemma}
\newtheorem{proposition}[theorem]{Proposition}

\newtheorem{question}{Question}

\theoremstyle{definition}

\newtheorem{example}[theorem]{Example}

\newtheorem*{remark}{Remark}

\newtheoremstyle{named}{}{}{\itshape}{}{\bfseries}{.}{.5em}{\thmnote{#3's }#1}
\theoremstyle{named}

\newtheoremstyle{named}{}{}{\itshape}{}{\bfseries}{.}{.5em}{\thmnote{#3 }#1}
\theoremstyle{named}
\newtheorem*{namedconjecture}{Conjecture}

\title{Power Graph Classes and Overfullness}
\author{}
\date{Elie Feinsilber}

\begin{document}

\maketitle

\begin{abstract}
In this paper, we investigate the edge-coloring number of the power graph of a
finite group. We show that the power graph of a finite group $G$ is overfull if and
only if the power graph of $G$ is of Class $2$ (has edge-coloring number one more
than its maximum vertex degree) if and only if $G$ is a cyclic group of odd prime
power order.
\end{abstract}

\section{Introduction}

The study of graphs defined on groups has been a growing area of research in the last decades. The study of these is not new as Cayley graphs have been around for some time, but their study was brought a new perspective when Kelarev and Quinn \cite{KelarevQuinn} introduced the directed power graph in 2002, a directed graph with vertex set the elements of a group and where $x \rightarrow y$ if $y$ is a power of $x$. Motivated by this, Chakrabarty, Ghosh, and Sen \cite{C-G-S} introduced the \textit{power graph}: the simple graph $\mathscr{G}(G)$ with vertex set the elements of $G$ and where $a \sim b$ if and only if one is a power of the other. This power graph, as well as other graphs defined on algebraic structures, like the enhanced power graph, or the commuting graph, have been studied extensively in recent years. A good overview of the results and current progress can be found in \cite{Survey-PG-groups}, \cite{PJC3}, or \cite{Grazian}. 

One of the most natural graph-theoretic questions that can be asked of graphs, and has many applications, is to determine its edge-coloring numbers. This is especially applicable to power graphs given that they often contain large cliques and dense subgraphs. We use $\chi'(\Gamma)$ to denote the minimum number of colors required to edge-color a graph $\Gamma$ (i.e., such that no two adjacent edges have the same color). It was shown by Vizing (\cite{Vizing}) and Gupta (\cite{Gupta}) independently that, for any graph, its coloring number is either the maximum degree of any vertex in the graph, or one more. We denote by $\Delta(\Gamma)$ the largest degree of any vertex in the graph $\Gamma$. 

\begin{theorem}(Vizing, Gupta)
    For every graph $\Gamma$, $\Delta(\Gamma) \leq \chi'(\Gamma) \leq \Delta(\Gamma)+1$. 
\end{theorem}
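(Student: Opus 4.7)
The plan is to handle the two inequalities separately. The lower bound $\Delta(\Gamma) \leq \chi'(\Gamma)$ is immediate: any vertex $v$ of maximum degree is incident to $\Delta(\Gamma)$ edges, and in a proper edge-coloring these must all receive pairwise distinct colors, so $\chi'(\Gamma) \geq \Delta(\Gamma)$.

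The upper bound is the substantive half. I would argue by induction on $|E(\Gamma)|$, the base case being trivial. For the inductive step, fix an edge $uv_0$, delete it, and use the inductive hypothesis to properly edge-color $\Gamma - uv_0$ with $\Delta(\Gamma)+1$ colors. Since every vertex has degree at most $\Delta(\Gamma)$ in $\Gamma$, and we have $\Delta(\Gamma)+1$ colors, each vertex is \emph{missing} at least one color. The remaining task is to recolor in a way that frees a common missing color at $u$ and $v_0$. The central tool is a \emph{Vizing fan} at $u$: a maximal sequence $v_0, v_1, \ldots, v_k$ of distinct neighbors of $u$ such that for each $i \geq 1$, the color of $uv_i$ is a color missing at $v_{i-1}$. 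One builds this fan greedily, starting from $v_0$. If at some stage a color missing at $v_i$ is also missing at $u$, then one can ``rotate'' the fan — reassign the color of $uv_{j+1}$ to $uv_j$ for $j < i$, and color $uv_i$ with the common missing color — thereby properly coloring $uv_0$.

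The difficulty is the case where no such rotation is immediately possible and the fan must be truncated with a Kempe-chain swap. Here one picks a color $\alpha$ missing at $u$ and a color $\beta$ missing at the terminal vertex $v_k$, considers the maximal $\alpha\beta$-alternating path starting at $u$, and swaps colors along it. A careful case analysis on where this path terminates — it cannot end at $u$ (as $\alpha$ is missing there), and the structure of the fan forces it to end at some $v_j$ from which a truncated fan becomes rotatable — produces the desired extension. The main obstacle is precisely this case analysis: one must verify that after the Kempe swap the set of colors missing at each relevant $v_j$ is unchanged except for the endpoint, so that the fan rotation argument remains valid on a prefix of the fan. Once this is checked, the induction closes and the bound $\chi'(\Gamma) \leq \Delta(\Gamma)+1$ follows.
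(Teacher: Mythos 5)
The paper does not prove this statement at all: it is the classical Vizing--Gupta theorem, quoted with citations to the original papers, so there is no in-paper argument to compare against. Your lower bound is complete and correct. Your upper bound, however, is an outline rather than a proof: you correctly identify the standard strategy (induction on edges, a maximal Vizing fan at $u$, rotation when a common missing color appears, and a Kempe-chain swap otherwise), but you explicitly defer the entire substance of the theorem --- the case analysis showing the swap always leads to a rotatable truncated fan --- with ``once this is checked, the induction closes.'' That check is not routine bookkeeping; it is the whole difficulty of Vizing's theorem.

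Moreover, the one concrete claim you make about that case analysis is not right as stated. By maximality of the fan, the color $\beta$ missing at $v_k$ must appear on some fan edge $uv_j$, and the maximal $\alpha\beta$-alternating path starting at $u$ (whose first edge is $uv_j$) need \emph{not} terminate at any fan vertex; ``the structure of the fan forces it to end at some $v_j$'' is false in general. The correct dichotomy is different: one asks whether the $\alpha\beta$-path issuing from $v_{j-1}$ terminates at $u$. If it does not, one swaps colors on that path, making $\alpha$ missing at $v_{j-1}$ while keeping it missing at $u$, and rotates the fan truncated at $v_{j-1}$; if it does, then the $\alpha\beta$-path from $v_k$ cannot also terminate at $u$ (as $u$ has degree one in the $\alpha\beta$-subgraph), so one swaps on that path and rotates the full fan to $v_k$. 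In either branch one must still verify that the swap does not disturb the colors and missing-color sets that make the truncated fan rotatable. Supplying this dichotomy and that verification would close the gap; as written, the proposal asserts the conclusion of the hard step rather than proving it.
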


Following this theorem, graphs with $\chi'(\Gamma) = \Delta(\Gamma)$ are referred to as Class 1 and graphs with $\chi'(\Gamma) = \Delta(\Gamma)+1$ are referred to as Class 2. We remark that determining whether a graph is Class 1 is an NP-complete problem (Holyer in \cite{NP}). 

\medskip

A few tools seem to have been very useful in the study of edge-colorings of graphs. First, the notion of ``overfullness" was introduced by Chetwynd and Hilton in \cite{C-H-initial}, although they only defined the exact term later in \cite{C-H-conjecture}. Given a graph $\Gamma$ on $n$ vertices, we say that $\Gamma$ is \textit{overfull} if $|E(\Gamma)| / \lfloor\frac{n}{2}\rfloor > \Delta(\Gamma)$. If $\Gamma$ is overfull, then $\chi'(\Gamma) = \Delta(\Gamma)+1$ as no color can be used for more than $\lfloor\frac{n}{2}\rfloor$ edges. 
One other tool for looking at edge colorings is what is called the core of a graph. The \textit{core} of $\Gamma$, denoted by $\Gamma_\Delta$, is the subgraph of $\Gamma$ induced by the vertices of degree $\Delta(\Gamma)$. This core has implications on the class of a graph:

\begin{theorem}Vizing \cite{Vizing-core}\label{Vizing}:
    If $\Gamma_\Delta$ has at most two vertices, then $\Gamma$ is class 1. 
\end{theorem}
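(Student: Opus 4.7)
The plan is to argue by contradiction: assume $\Gamma$ is Class 2 and derive that $|V(\Gamma_\Delta)| \geq 3$. The first reduction is to pass to a $\chi'$-critical subgraph. Let $H$ be a subgraph of $\Gamma$, minimal under edge deletion, with $\chi'(H) = \Delta(\Gamma) + 1$. Then $\chi'(H - e) = \Delta(\Gamma)$ for every $e \in E(H)$, and $\chi'(H) \leq \Delta(H) + 1$ combined with $\Delta(H) \leq \Delta(\Gamma)$ forces $\Delta(H) = \Delta(\Gamma)$. Since any vertex of maximum degree in $H$ has degree $\Delta(\Gamma)$ in $H$ and hence in $\Gamma$, we have $V(H_\Delta) \subseteq V(\Gamma_\Delta)$, so it suffices to prove $|V(H_\Delta)| \geq 3$.

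The heart of the proof is Vizing's Adjacency Lemma for $H$: in a $\chi'$-critical graph with $\chi' = \Delta + 1$, for every edge $uv$, the vertex $u$ has at least $\Delta - d(v) + 1$ neighbors of maximum degree. I would prove this by the Vizing fan technique. Fix a proper $\Delta$-edge-coloring of $H - uv$; since $uv$ cannot be colored in $H$, the set of colors missing at $u$ is disjoint from the set of colors missing at $v$. Then iteratively construct a fan at $u$: a sequence $v = x_1, x_2, \dots$ of neighbors of $u$ such that the edge $u x_{i+1}$ carries a color missing at $x_i$. A Kempe chain interchange between a color missing at $u$ and one missing at some fan vertex $x_j$ either yields a proper $\Delta$-coloring of $H$ extending to $uv$ (contradicting criticality) or forces many of the fan vertices to have maximum degree, yielding the stated bound.

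With the adjacency lemma in hand, the contradiction is immediate. A critical graph has $\Delta \geq 2$ (matchings are Class 1), so $V(H_\Delta)$ is nonempty. Pick $u \in V(H_\Delta)$ and any neighbor $v$ of $u$. Applying the adjacency lemma to the edge $vu$ shows that $u$ has at least $\Delta - d(v) + 1$ neighbors of maximum degree. If $d(v) < \Delta$ this is already $\geq 2$; otherwise every neighbor of $u$ has degree $\Delta$, so all $\Delta \geq 2$ neighbors of $u$ are max-degree. Either way $u$ has at least two max-degree neighbors, whence $V(H_\Delta)$ contains $u$ together with these two, giving $|V(H_\Delta)| \geq 3$ and contradicting $|V(\Gamma_\Delta)| \leq 2$. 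The main technical obstacle is the Kempe chain manipulation in the Vizing fan proof of the adjacency lemma; the rest is a routine deduction.
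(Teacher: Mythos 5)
The paper does not prove this statement at all: it is quoted as a known theorem of Vizing with a citation to the literature, so there is no in-paper argument to compare against. Your route is the standard textbook one and is correct in outline: pass to an edge-critical subgraph $H$ with $\chi'(H)=\Delta(\Gamma)+1$, observe $\Delta(H)=\Delta(\Gamma)$ and $V(H_\Delta)\subseteq V(\Gamma_\Delta)$, then invoke Vizing's Adjacency Lemma to produce three max-degree vertices. Two small wording issues: first, minimality only gives $\chi'(H-e)\leq\Delta(\Gamma)$, not equality (this does not affect anything); second, in the final step the phrase ``otherwise every neighbor of $u$ has degree $\Delta$'' does not follow from the chosen $v$ having degree $\Delta$ --- you should phrase it as ``if some neighbor $v$ of $u$ has $d(v)<\Delta$, apply the lemma to that $v$; otherwise all $d(u)=\Delta\geq 2$ neighbors of $u$ already have maximum degree.'' The substantive caveat is that all the real work lives in the Adjacency Lemma, which you only sketch via the fan/Kempe-chain argument; as written the proposal is a correct reduction to a nontrivial lemma rather than a complete proof. (One could alternatively prove the two-vertex statement directly by induction on edges using a single Vizing-fan step, without isolating the full Adjacency Lemma, but your version is the standard and cleaner-to-state one.)
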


\begin{theorem}Fournier \cite{Fournier}:
    If $\Gamma_\Delta$ contains no cycle, then $\Gamma$ is class 1. 
\end{theorem}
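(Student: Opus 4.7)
The plan is to argue by contradiction using Vizing's Adjacency Lemma, after first reducing to a critical subgraph. Write $\Delta = \Delta(\Gamma)$, and suppose toward contradiction that $\Gamma$ is class 2 with acyclic core $\Gamma_\Delta$. The cases $\Delta \leq 1$ are trivial since $\Gamma$ is then a disjoint union of edges and isolated vertices, so assume $\Delta \geq 2$.

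First I would pass to an edge-minimal subgraph $G \subseteq \Gamma$ with $\chi'(G) = \Delta + 1$. Since $\chi'(G) \leq \Delta(G) + 1 \leq \Delta + 1$ with equality forced, we get $\Delta(G) = \Delta$, so $G$ is $\Delta$-critical. Moreover, any vertex of degree $\Delta$ in $G$ already has degree $\Delta$ in $\Gamma$, and every edge of $G_\Delta$ lies in $\Gamma_\Delta$, so $G_\Delta$ is a subgraph of $\Gamma_\Delta$ and hence still acyclic. Replacing $\Gamma$ by $G$, we may assume $\Gamma$ itself is $\Delta$-critical with acyclic core.

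Now I would invoke Vizing's Adjacency Lemma: in any $\Delta$-critical graph, each edge $uv$ satisfies that $u$ has at least $\Delta - d(v) + 1$ neighbors of degree $\Delta$ distinct from $v$. Because $\Gamma_\Delta$ is a forest, it has a vertex $v$ with at most one neighbor inside $\Gamma_\Delta$, namely either an isolated vertex of the forest or a leaf of some tree component. Since $d(v) = \Delta \geq 2$ while $v$ has at most one max-degree neighbor, there must be some $w$ adjacent to $v$ with $d(w) < \Delta$. Applying the lemma to the edge $vw$ yields at least $\Delta - d(w) + 1 \geq 2$ max-degree neighbors of $v$ distinct from $w$, contradicting the fact that $v$ has at most one max-degree neighbor in total.

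The main obstacle is Vizing's Adjacency Lemma itself, whose standard proof uses the classical Vizing-fan and Kempe-chain machinery: starting from a partial $\Delta$-edge-coloring of $G - uv$, one builds a maximal fan at $u$ beginning with $v$ in which each successive edge carries a color missing at the previous fan vertex, and then performs Kempe swaps along two-colored paths to force that enough fan vertices are of maximum degree to close the count. Once this lemma is in hand, the forest structure of the core collapses immediately to the short contradiction above.
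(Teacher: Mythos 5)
The paper does not actually prove this statement --- it is quoted from Fournier as a known result --- so there is no internal proof to compare against; I can only judge your argument on its own merits, and it is correct. It is the standard modern derivation of Fournier's theorem. The reduction to an edge-minimal class-2 subgraph $G$ is handled properly: $\Delta(G) = \Delta(\Gamma)$ is forced by Vizing's bound, degrees only drop when passing to subgraphs, so every edge of $G_\Delta$ is an edge of $\Gamma_\Delta$ and the core stays acyclic. The core of $G$ is nonempty because $\Delta(G)=\Delta$, so the forest has a vertex $v$ with at most one neighbor of degree $\Delta$; since $d(v)=\Delta\geq 2$, $v$ has a neighbor $w$ with $d(w)<\Delta$, and Vizing's Adjacency Lemma applied to the edge $vw$ gives $v$ at least $\Delta-d(w)+1\geq 2$ neighbors of maximum degree, contradicting the leaf property. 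The only external input is the Adjacency Lemma itself, which you correctly flag as the real content and which is a standard citable result (your sketch of its fan-and-Kempe-chain proof is accurate). For what it is worth, Fournier's original 1973 argument runs a direct induction with the recoloring machinery rather than routing through the Adjacency Lemma, but the two approaches are essentially equivalent in substance, and yours is the cleaner textbook route.
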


Finally, Chetwynd and Hilton made a conjecture about edge colorings: 

\begin{namedconjecture}[Overfull]Chetwynd and Hilton \cite{C-H-conjecture}:
    A graph \( \Gamma \) with \( \Delta(\Gamma) > \frac{n}{3} \) is class 2 if and only if it has an overfull subgraph \( S \) such that \( \Delta(\Gamma) = \Delta(S) \).
\end{namedconjecture}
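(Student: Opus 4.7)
The conjecture has two directions of very different difficulty. The easy direction is essentially immediate: if $\Gamma$ contains an overfull subgraph $S$ with $\Delta(S) = \Delta(\Gamma)$, then by definition $\chi'(S) = \Delta(S)+1 = \Delta(\Gamma)+1$, and since any proper edge-coloring of $\Gamma$ restricts to one of $S$, we get $\chi'(\Gamma) \geq \chi'(S) = \Delta(\Gamma)+1$, forcing $\Gamma$ to be class 2. This direction uses neither the hypothesis $\Delta(\Gamma) > n/3$ nor any structural information about $\Gamma$, so the plan is simply to record it as a one-line observation.

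The reverse direction is where all the content lies, and the plan is to argue by contradiction: assume $\Gamma$ is class 2 with $\Delta(\Gamma) > n/3$ but that no overfull subgraph of $\Gamma$ has maximum degree equal to $\Delta(\Gamma)$. Pick an edge $e$; by minimality $\Gamma - e$ admits a $\Delta(\Gamma)$-edge-coloring. I would then attempt, via iterated Kempe-chain swaps together with Vizing-fan or Tashkinov-tree techniques rooted at the endpoints of $e$, to produce either an extension of this coloring to all of $\Gamma$ (contradicting class 2) or an identifiable dense subgraph whose edge count combined with its size makes it overfull at the maximum degree (contradicting the non-existence assumption). The hypothesis $\Delta(\Gamma) > n/3$ is meant to be the pigeonhole engine: it forces neighborhoods of maximum-degree vertices to overlap heavily, so the Tashkinov tree one builds cannot escape into a sparse region and must remain concentrated on a set whose induced subgraph is a candidate for the overfull witness. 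The hoped-for outcome is that the obstruction to extending the coloring exactly localizes the overfull subgraph.

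The main obstacle — and it is a serious one — is that this statement is precisely the Overfull Conjecture, a long-standing open problem in edge-coloring. It is known in a variety of restricted regimes, for instance when $\Delta(\Gamma) \geq n-3$, for complete multipartite graphs, and for several classes of low deficiency, and in those cases the proof does follow the template above. Pushing the ratio all the way down to $n/3$, however, has so far defeated the Kempe-chain and fan machinery, because once $\Gamma_\Delta$ is allowed to be spread thinly across many vertices one loses the local control needed to pin down where an overfull subgraph could be hiding. Realistically, then, my plan reduces to three steps: dispatch the easy direction, recover the known partial cases as a sanity check on the Tashkinov-tree bookkeeping, and try to sharpen that bookkeeping by exploiting the $\Delta(\Gamma) > n/3$ bound more carefully than has been done to date; I would not expect to settle the conjecture in full generality, and for the purposes of this paper I would instead cite the conjecture and only invoke the directions or special cases that are already theorems.
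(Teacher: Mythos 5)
Your assessment is exactly right: this statement is the Overfull Conjecture, which the paper states without proof (and which remains open), relying elsewhere only on the proven special case $\Delta(\Gamma) \geq |V(\Gamma)| - 3$ — precisely the fallback you propose. Your one-line argument for the easy direction (an overfull subgraph $S$ with $\Delta(S) = \Delta(\Gamma)$ forces $\chi'(\Gamma) \geq \chi'(S) = \Delta(\Gamma)+1$) is correct, and your candid recognition that the converse cannot currently be proved matches the paper's treatment.
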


Some progress was made on the conjecture in various papers by Chetwynd and Hilton \cite{C-H-3}, \cite{C-H_other}, Plantholt \cite{Plantholt-1}, \cite{Plantholt_even}, and subsequently Rhee \cite{Rhee}, the combination of which amounted to the following theorem: 
 
\begin{theorem}\label{overfull proven part}
    Let $G$ be a simple graph with $\Delta(\Gamma) \geq |V(\Gamma)| - 3$. Then $\Gamma$ is Class $2$ if and only if $\Gamma$ contains an overfull subgraph $H$ with $\Delta(H) = \Delta(\Gamma)$. 
\end{theorem}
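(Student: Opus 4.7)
The backward direction is essentially immediate from the definition of overfullness: if $\Gamma$ contains an overfull subgraph $H$ with $\Delta(H)=\Delta(\Gamma)$, then since $H$ is overfull, no color class in a proper edge-coloring of $H$ can contain more than $\lfloor |V(H)|/2 \rfloor$ edges, forcing $\chi'(H) \geq \Delta(H)+1$. Restricting any edge-coloring of $\Gamma$ to $H$ then shows $\chi'(\Gamma) \geq \Delta(H)+1 = \Delta(\Gamma)+1$, so $\Gamma$ is Class 2. All the work lies in the forward direction, so I would henceforth assume $\Gamma$ is Class 2 with $\Delta := \Delta(\Gamma) \geq n - 3$, and aim to produce an overfull subgraph $H \subseteq \Gamma$ with $\Delta(H)=\Delta$.

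The plan is to reduce to a \emph{critical} Class 2 subgraph $\Gamma'$ of $\Gamma$, that is, a minimal Class 2 subgraph with $\Delta(\Gamma')=\Delta$ (such a $\Gamma'$ exists because any edge-critical subgraph of maximum degree $\Delta$ remains Class 2 by minimality). One then leverages Vizing's Adjacency Lemma, which says that in a critical Class 2 graph, every vertex $v$ of degree $d(v)=k$ has at least $\Delta - k + 1$ neighbors of degree $\Delta$. Because $\Delta \geq n-3$, every vertex in $\Gamma'$ is adjacent to almost all others, so Vizing's Adjacency Lemma gives very strong lower bounds on the number $n_\Delta$ of vertices of maximum degree and, more importantly, strong structural control on the non-edges of $\Gamma'$. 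I would then run a case analysis on $\Delta \in \{n-1, n-2, n-3\}$. In the easiest case $\Delta = n-1$, a universal vertex exists and one peels it off to induct or directly count; in the middle case $\Delta = n-2$, each maximum-degree vertex is non-adjacent to exactly one other vertex, so the non-edges on the core induce a very restricted bipartite-like structure; and in the final case $\Delta = n-3$ the non-edge structure is only slightly more complex but still tightly constrained.

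In each case the goal is to identify an overfull subgraph $H$ inside $\Gamma'$. The natural candidate is either the whole graph $\Gamma'$, the core $\Gamma'_\Delta$, or $\Gamma'_\Delta$ augmented by one or two additional vertices chosen to preserve the maximum degree, and the verification reduces to an edge count: showing $|E(H)| > \Delta \cdot \lfloor |V(H)|/2 \rfloor$. Summing degrees in $H$ using the lower bound on $n_\Delta$ from Vizing's Adjacency Lemma, combined with the parity restriction that $|V(H)|$ must be odd for overfullness to occur (since otherwise $\lfloor |V(H)|/2 \rfloor \cdot \Delta$ is already an upper bound on the handshake sum), gives the required inequality.

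The main obstacle is the case $\Delta = n-3$, and in particular the parity subtleties created by the floor $\lfloor n/2 \rfloor$ in the overfull inequality; this is precisely why the combined proof spans \cite{C-H-3}, \cite{C-H_other}, \cite{Plantholt-1}, \cite{Plantholt_even}, and \cite{Rhee}, with separate treatments for $n$ even versus $n$ odd. Here Vizing's Adjacency Lemma alone is not sharp enough, and one needs finer extensions (such as iterated adjacency lemmas controlling the second-neighborhood of small-degree vertices) to rule out the marginal configurations in which the natural candidate subgraph misses the overfull bound by exactly one edge. The chief technical task of the proof, therefore, is to show that these marginal configurations either force a still smaller overfull subgraph to appear or contradict the criticality of $\Gamma'$.
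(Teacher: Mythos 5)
First, note that the paper itself offers no proof of this theorem: it is imported as a black box from the combined work of Chetwynd--Hilton, Plantholt, and Rhee, and the only in-paper discussion is a closing remark sketching Plantholt's and Rhee's arguments for the special case $\Delta(\Gamma)=|V(\Gamma)|-1$. So there is no internal proof to measure yours against. Judged on its own terms, your backward direction is correct and complete. The forward direction, however, is a research plan rather than a proof. The preliminary reductions are sound (every Class 2 graph contains a critical Class 2 subgraph of the same maximum degree, and an overfull subgraph must have an odd number of vertices), and invoking Vizing's Adjacency Lemma is indeed the opening move of the Chetwynd--Hilton line of papers. But everything that makes the theorem a theorem is deferred: the case analysis over $\Delta\in\{n-1,n-2,n-3\}$ is announced but not carried out, and you yourself concede that the Adjacency Lemma ``alone is not sharp enough'' for $\Delta=n-3$ and that one needs ``finer extensions'' which you neither state nor prove. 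Asserting that the marginal configurations ``either force a still smaller overfull subgraph to appear or contradict the criticality'' is a description of the chief difficulty, not a resolution of it.

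There is also a structural gap in the strategy itself. Your plan only succeeds if you can show that every $\Delta$-critical Class 2 graph with $\Delta\ge n-3$ is (or contains, with the same maximum degree) an overfull subgraph; but proving the contrapositive --- that a graph with no such overfull subgraph is Class 1 --- ultimately requires \emph{exhibiting} a $\Delta$-edge-coloring, and a degree-counting argument on the core cannot by itself produce one. This is exactly why Plantholt and Rhee argue constructively: for $\Delta=n-1$ they start from a near-$1$-factorization of $K_{2n+1}$ minus a matching of $n$ independent edges and then transform it into the target graph by color-alternating-path (Kempe chain) exchanges, as the paper's own remark and worked example for $\mathscr{G}(C_{15})$ illustrate. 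So your proposal correctly names the toolbox of one known approach but leaves the theorem unproven; as written it would not be acceptable as a self-contained proof, and the honest course --- which is what the paper does --- is to cite \cite{C-H-3}, \cite{C-H_other}, \cite{Plantholt-1}, \cite{Plantholt_even}, and \cite{Rhee} for it.
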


\bigskip

Back to power graphs, since the identity element of the group $G$ is a power of every element of the group, we can get a clearer idea of the structure of the graph. Notably, the following result of Cameron is very useful to us. 

\begin{proposition}\cite{PJC2}\label{non trivial center implications}
     \textit{Let $G$ be a finite group; let $S$ be the set of vertices of the power graph $\mathcal{G}(G)$ which are joined to all other vertices. Suppose that $|S| > 1$. Then one of the following occurs:}
     \begin{enumerate}[(a)]
    \item $G$ is cyclic of prime power order, and $S = G$;
    \item $G$ is cyclic of non-prime-power order $n$, and $S$ consists of the identity and the generators of $G$, so that $|S| = 1 + \varphi(n)$;
    \item $G$ is generalised quaternion, and $S$ contains the identity and the unique involution in $G$, so that $|S| = 2$.
    \end{enumerate}
\end{proposition}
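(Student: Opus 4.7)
The plan is to analyze $S$ by first observing that any $s \in S$ lies in $Z(G)$, since $s$ and any $g \in G$ lie in a common cyclic subgroup and hence commute. I then split on whether $G$ is cyclic, and in the non-cyclic case reduce to Burnside's classical theorem that a finite $p$-group with a unique subgroup of order $p$ is cyclic, or (for $p=2$) generalized quaternion.

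If $G$ is cyclic, I compute $S$ directly. When $|G|$ is a prime power, the subgroup lattice of $G$ is a chain, so every pair of elements generates comparable cyclic subgroups and $S = G$, giving case (a). When $|G| = n$ has at least two distinct prime factors, the generators of $G$ are evidently in $S$; for any element of order $d$ with $1 < d < n$, a short divisor argument produces $d' \mid n$ with $d' \nmid d$ and $d \nmid d'$, and then any element of order $d'$ witnesses an incomparable cyclic subgroup, so $S = \{e\} \cup \{\text{generators of } G\}$, giving case (b).

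For the non-cyclic case, fix $s \in S \setminus \{e\}$; then $\langle s \rangle \subsetneq G$, and comparability forces $\langle s \rangle \subsetneq \langle g \rangle$ for every $g \notin \langle s \rangle$. The first main step is to show that $|s|$ is a prime power. Suppose for contradiction that $|s|$ has two distinct prime divisors. Pick any $g \in G \setminus \langle s \rangle$, so $|s|$ properly divides $|g|$, and let $p$ be a prime with $v_p(|g|) > v_p(|s|)$. The element $g^{|g|/p^{v_p(|g|)}}$ has order $p^{v_p(|g|)}$, strictly larger than the $p$-part of $|s|$, so it does not lie in the cyclic group $\langle s \rangle$. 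Comparability then forces $\langle s \rangle$ into this cyclic $p$-group, contradicting that $|s|$ has another prime factor. Thus $|s| = p^a$, and replacing $s$ by $s^{p^{a-1}}$ I may assume $|s| = p$ is prime.

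With $|s| = p$, every non-identity $g \in G$ satisfies $p \mid |g|$ (either $g \in \langle s \rangle$ has order $p$, or $\langle s \rangle \subsetneq \langle g \rangle$ gives $p \mid |g|$), so Cauchy's theorem forces $G$ to be a $p$-group, and every subgroup of order $p$ coincides with $\langle s \rangle$. Burnside's theorem then yields $G$ cyclic (excluded by assumption) or generalized quaternion with $p=2$; hence $G$ is generalized quaternion and $s$ is its unique involution $z$. The remaining verification that $S = \{e, z\}$ is a direct check in the standard presentation of $Q_{2^n}$: any $t \neq e, z$ inside $\langle a \rangle$ is incomparable to $b$ (since $\langle a \rangle \cap \langle b \rangle = \langle z \rangle$), while any $t = a^i b$ has order $4$ and is incomparable to $a$. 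The main obstacle is the reduction that $|s|$ must be a prime power; the rest follows cleanly from Burnside's classification.
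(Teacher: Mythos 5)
The paper does not actually prove this proposition: it is imported verbatim from Cameron's paper \cite{PJC2}, so there is no in-paper argument to compare yours against. Judged on its own, your proof is correct and essentially reconstructs the standard argument: in the cyclic case the divisor analysis of $S$ is right (the incomparable divisor $d'$ always exists because $n$ has at least two prime factors), and in the non-cyclic case the key chain --- comparability of $\langle s\rangle$ with every $\langle g\rangle$, the reduction to $|s|$ a prime power, Cauchy to force $G$ to be a $p$-group with a unique subgroup of order $p$, and Burnside's classification --- is exactly the right skeleton, with the quaternion check at the end completing case (c). Two small points deserve more care in a write-up. First, the phrase ``replacing $s$ by $s^{p^{a-1}}$'' suggests that $S$ is closed under taking powers, which is false in general (in $C_{12}$ a generator lies in $S$ but its square of order $6$ does not); here the replacement is harmless only because $\langle s\rangle$ is a cyclic $p$-group, so every $g\in\langle s\rangle$ is automatically comparable to $s^{p^{a-1}}$, and every $g\notin\langle s\rangle$ contains all of $\langle s\rangle$ --- you should say this explicitly rather than lean on the replacement. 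Second, in case (c) you verify $S\subseteq\{e,z\}$ but should also note that $z\in S$ (every non-identity element of $Q_{2^n}$ has even order, so $\langle g\rangle$ contains the unique involution $z$); since your chosen $s\in S\setminus\{e\}$ must then equal $z$, this closes the loop and gives $|S|=2$.
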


\section{Edge-coloring the power graph}

We will now consider the graph-theoretic concepts outlined above specifically for power graphs of finite groups. 

\begin{proposition}\label{even is not overfull}
    If $|G|$ is even, then $\mathscr{G}(G)$ is not overfull. 
\end{proposition}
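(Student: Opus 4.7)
The plan is to observe that this proposition is not really a statement about power graphs at all: it is an instance of the general fact that \emph{any} graph on an even number of vertices fails to be overfull. So I would not use any properties of $\mathscr{G}(G)$ beyond having $|G|$ vertices.

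Concretely, I would set $n = |G|$ and let $\Delta = \Delta(\mathscr{G}(G))$. Since $n$ is even, $\lfloor n/2 \rfloor = n/2$, and the overfullness inequality $|E(\mathscr{G}(G))| / \lfloor n/2 \rfloor > \Delta$ is equivalent to $2 |E(\mathscr{G}(G))| > n \Delta$. The handshake lemma gives
\[
2 |E(\mathscr{G}(G))| \;=\; \sum_{v \in V(\mathscr{G}(G))} \deg(v) \;\leq\; n \Delta,
\]
since each of the $n$ summands is bounded above by $\Delta$. This contradicts the strict inequality required for overfullness, so $\mathscr{G}(G)$ is not overfull.

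There is essentially no obstacle to overcome; the statement is immediate from the definition of overfullness together with the degree-sum formula, and the argument does not invoke Proposition~\ref{non trivial center implications} or any structural feature particular to power graphs. The content of the proposition lies entirely in the choice of the floor function in the definition, which makes even-order and odd-order groups behave asymmetrically and means that the interesting case to analyze later will be $|G|$ odd.
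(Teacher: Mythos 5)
Your proposal is correct. It differs slightly from the paper's argument in that it proves the stronger, purely graph-theoretic statement that \emph{no} graph on an even number of vertices is overfull: writing the overfullness condition as $2|E| > n\Delta$ and bounding $2|E| = \sum_v \deg(v) \le n\Delta$ by the handshake lemma works for any value of $\Delta$. The paper instead substitutes the specific value $\Delta(\mathscr{G}(G)) = 2n-1$ (valid because the identity is adjacent to every other vertex, though this is left implicit in that proof) and observes that the resulting requirement $|E| > \frac{2n(2n-1)}{2}$ exceeds the edge count of $K_{2n}$. Both are one-line computations; yours buys full generality and removes the reliance on the universal vertex, while the paper's version is consistent with how it treats the odd case in Lemma~\ref{afford}, where the comparison with the complete graph is the organizing idea.
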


\begin{proof}
    Say $|G| = 2n$. Then 
    \begin{align*}     \frac{|E(\mathscr{G}(G))|}{\lfloor 2n/2 \rfloor} > 2n-1 &\Leftrightarrow |E(\mathscr{G}(G))| > \frac{2n(2n-1)}{2}, \end{align*} which never holds since a complete graph on $2n$ vertices has $\binom{2n}{n} = \frac{2n(2n-1)}{2}$ edges.
\end{proof}

\begin{lemma}\label{afford}
    If $G$ is a group of odd order with $|G| = 2n + 1$, then, to be overfull, $\mathscr{G}(G)$ can have at most $n-1$ edges removed compared to $K_{2n+1}$. 
\end{lemma}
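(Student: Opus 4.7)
The plan is to observe that the maximum degree of $\mathscr{G}(G)$ is forced to be as large as possible, and then read off the overfull inequality directly. First I would note that the identity element of $G$ lies in every cyclic subgroup $\langle g \rangle$, so the identity is a power of every element of $G$, meaning its vertex in $\mathscr{G}(G)$ is adjacent to all other $2n$ vertices. Hence $\Delta(\mathscr{G}(G)) = 2n$.

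Next I would unpack the overfull condition for a graph on $2n+1$ vertices: it asks that
\[
\frac{|E(\mathscr{G}(G))|}{\lfloor (2n+1)/2 \rfloor} = \frac{|E(\mathscr{G}(G))|}{n} > \Delta(\mathscr{G}(G)) = 2n,
\]
which rearranges to $|E(\mathscr{G}(G))| > 2n^2$. Finally, I would compare this to $|E(K_{2n+1})| = \binom{2n+1}{2} = n(2n+1) = 2n^2 + n$, so the number of missing edges $k = |E(K_{2n+1})| - |E(\mathscr{G}(G))|$ satisfies $k < n$, i.e.\ $k \leq n - 1$.

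There isn't really a main obstacle: the entire argument rests on the observation that the identity acts as a universal vertex in any power graph, fixing $\Delta$ at its maximum possible value $2n$, after which the bound $n-1$ falls out of elementary arithmetic.
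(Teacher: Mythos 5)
Your proof is correct and follows essentially the same route as the paper's: unpack the overfull inequality to get $|E(\mathscr{G}(G))| > 2n^2$, then subtract from $\binom{2n+1}{2} = 2n^2 + n$ to conclude at most $n-1$ edges may be missing. The only difference is that you explicitly justify $\Delta(\mathscr{G}(G)) = 2n$ via the identity being a universal vertex, which the paper leaves implicit; this is a small improvement in completeness, not a different argument.
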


\begin{proof}
    For $\mathscr{G}(G)$ to be overfull, we need 
    \begin{align*}
    \frac{|E(\mathscr{G}(G))|}{\lfloor (2n+1) /2 \rfloor} > 2n \Leftrightarrow \frac{|E(\mathscr{G}(G))|}{\lfloor 2n /2 \rfloor} > 2n \Leftrightarrow |E(\mathscr{G}(G))|> 2n^2. 
    \end{align*}
    Now, 
    \begin{align*}
        \frac{2n(2n+1)}{2} - (2n^2+1) = \frac{2n(2n+1-2n)}{2} -1 = n-1, 
    \end{align*}
    so we may lose up to $n-1$ edges if we want to remain overfull. 
\end{proof}

\begin{proposition}\label{only id join so not overfull}
    If only the identity is joined to all other vertices, then $\mathscr{G}(G)$ is not overfull. 
\end{proposition}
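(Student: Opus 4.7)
The plan is to combine Proposition \ref{even is not overfull} with a direct edge-count against the bound in Lemma \ref{afford}. If $|G|$ is even, there is nothing to show. So the interesting case is $|G| = 2n+1$ odd, and the whole point is that the hypothesis ``only the identity is a universal vertex'' forces too many non-edges.

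First I would note that since the identity is joined to every other vertex, every non-edge in $\mathscr{G}(G)$ lies among the $2n$ non-identity vertices. The hypothesis says that every non-identity vertex $g$ fails to be joined to at least one other vertex; since the identity is joined to $g$, that missing neighbor must itself be a non-identity vertex. Hence each of the $2n$ non-identity vertices is incident to at least one non-edge.

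Next I would convert this into a lower bound on the total number of non-edges $N$ in $\mathscr{G}(G)$. Summing the non-edge-degree over the non-identity vertices gives $2N \geq 2n$, so $N \geq n$. By Lemma \ref{afford}, overfullness of $\mathscr{G}(G)$ would require $N \leq n-1$, contradicting $N \geq n$. Therefore $\mathscr{G}(G)$ is not overfull.

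There is no real obstacle here beyond being careful that the counting uses the fact that \emph{every} non-identity vertex contributes at least one non-edge incidence. The only other thing to check is the edge case $|G|=1$, which is trivial (a single vertex cannot be overfull), so the argument covers all groups.
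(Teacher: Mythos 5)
Your proof is correct and follows essentially the same route as the paper: handle the even case via Proposition \ref{even is not overfull}, then observe that each of the $2n$ non-identity vertices is incident to at least one non-edge, giving at least $n$ missing edges and contradicting Lemma \ref{afford}. Your write-up is in fact slightly more careful than the paper's, since you make the double-counting step ($2N \geq 2n$, hence $N \geq n$) explicit rather than just asserting the count.
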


\begin{proof}
    From before, if $|G|$ is even, then $\mathscr{G}(G)$ is not overfull. So say $|G| = 2n+1$ and only the identity is joined to all other vertices. Then every other element, $2n$ of them, must have at least one vertex to which it is not adjacent. That's at least $n$ missing edges, hence breaking the requirement of the above lemma. Therefore if only the identity is joined to all other vertices, $\mathscr{G}(G)$ is not overfull.
\end{proof}

\bigskip

\begin{proposition}\label{odd cyclic is overfull}
     If $G$ is a cyclic group of odd prime-power order, then $\mathscr{G}(G)$ is overfull. 
\end{proposition}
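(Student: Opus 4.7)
The plan is to reduce the statement to checking a simple inequality for the complete graph. My first step is to observe that when $G$ is cyclic of odd prime-power order $p^k$, the power graph $\mathscr{G}(G)$ is in fact the complete graph $K_{p^k}$. This is because the subgroup lattice of a cyclic $p$-group is a chain: for any two elements $a, b \in G$, one of $\langle a \rangle, \langle b \rangle$ contains the other, so one of $a, b$ is a power of the other. (Alternatively, this is exactly case (a) of Proposition \ref{non trivial center implications}, which states that $S = G$ in this situation, meaning every vertex is adjacent to every other vertex.)

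The second step is to verify that $K_n$ is overfull whenever $n$ is odd. With $n = p^k$ odd, we have $|E(K_n)| = \binom{n}{2} = \frac{n(n-1)}{2}$, $\Delta(K_n) = n-1$, and $\lfloor n/2 \rfloor = \frac{n-1}{2}$. Then
\begin{align*}
\frac{|E(K_n)|}{\lfloor n/2 \rfloor} = \frac{n(n-1)/2}{(n-1)/2} = n > n-1 = \Delta(K_n),
\end{align*}
so the overfullness condition is satisfied.

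There is no real obstacle here — the proof is essentially a two-line reduction, with the only content being the identification $\mathscr{G}(G) = K_{p^k}$, which is the well-known structural fact about cyclic $p$-groups that was already packaged into the cited result of Cameron. The odd hypothesis is used precisely to ensure $\lfloor n/2 \rfloor = (n-1)/2$ rather than $n/2$, which is what makes the ratio strictly exceed $\Delta$.
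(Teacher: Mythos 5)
Your proof is correct and follows essentially the same route as the paper: identify $\mathscr{G}(G)$ as the complete graph via Proposition \ref{non trivial center implications}, then verify the overfullness inequality for a complete graph on an odd number of vertices. The only cosmetic difference is that you check the ratio directly, while the paper compares the edge count $2n^2+n$ against the threshold $2n^2$ established in Lemma \ref{afford}.
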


\begin{proof}
    If $G$ is a cyclic group of odd prime power order, then $\mathscr{G}(G)$ is complete as shown in Proposition \ref{non trivial center implications}. In which case we have $\frac{2n(2n+1)}{2} = 2n^2+n$ edges which is more than the necessary $2n^2$.
\end{proof}

\begin{lemma}\label{lemma: cyclic not prime not ovf}
    If $G$ is a cyclic group of odd order which is not a prime power, then $\mathscr{G}(G)$ is not overfull. 
\end{lemma}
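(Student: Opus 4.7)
The plan is to apply Lemma \ref{afford} in contrapositive form: if $\mathscr{G}(G)$ is missing at least $n$ edges compared with $K_{2n+1}$, where $|G| = 2n+1$, then it is not overfull. So the task reduces to producing enough non-adjacent pairs.

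In a cyclic group, two elements are adjacent in $\mathscr{G}(G)$ if and only if the order of one divides the order of the other, so any pair whose orders are incomparable under divisibility contributes a non-edge. To exploit the non-prime-power hypothesis, I would split along a single prime $p \mid |G|$: write $|G| = p^{\alpha} m_p$ with $\gcd(p, m_p) = 1$, and note that $m_p > 1$ because $|G|$ is not a prime power. Let $A$ consist of the non-identity elements of $p$-power order, i.e.\ the non-identity elements of the unique cyclic subgroup of order $p^{\alpha}$, so $|A| = p^{\alpha} - 1$; and let $B$ consist of the non-identity elements of order coprime to $p$, i.e.\ the non-identity elements of the unique cyclic subgroup of order $m_p$, so $|B| = m_p - 1$. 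For any $x \in A$ and $y \in B$, the order of $x$ is a positive power of $p$ while the order of $y$ is a nontrivial divisor of $m_p$ coprime to $p$; hence neither order divides the other and $x \not\sim y$. This gives at least $(p^{\alpha} - 1)(m_p - 1)$ non-edges in $\mathscr{G}(G)$.

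The last step is a short inequality check: $(p^{\alpha} - 1)(m_p - 1) \geq (|G|-1)/2$. Expanding and simplifying, this is equivalent to $(p^{\alpha} - 2)(m_p - 2) \geq 1$. Since $|G|$ is odd, $p \geq 3$ and so $p^{\alpha} \geq 3$; and since $m_p > 1$ is odd, $m_p \geq 3$. Both factors are therefore at least $1$, the inequality holds, and Lemma \ref{afford} then yields that $\mathscr{G}(G)$ is not overfull.

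I expect the main subtlety to be the choice of the sets $A$ and $B$. Using only elements of order exactly $p$ and exactly $q$ for two distinct prime divisors $p \ne q$ yields only $(p-1)(q-1)$ non-edges, which falls short when the prime-power parts of $|G|$ are very unbalanced (for instance $|G| = 3^{\alpha} \cdot 5$ with $\alpha$ large). Taking the whole $p$-primary part of $G$ on one side and its complement on the other is what produces enough non-edges uniformly, and reduces the entire argument to the one-line inequality $(p^{\alpha} - 2)(m_p - 2) \geq 1$.
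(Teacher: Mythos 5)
Your proof is correct, and it takes a genuinely different route from the paper's. The paper decomposes $2n+1 = p_1^{a_1}\cdots p_r^{a_r}$ and, for each $i$, looks at the $\varphi\!\left(\frac{2n+1}{p_i^{a_i}}\right)$ elements of the maximal non-generating order $\frac{2n+1}{p_i^{a_i}}$; elements attached to distinct primes have incomparable orders, and the paper estimates the resulting $(p-1)(q-1) = pq-(p+q)+1$ missing edges against the budget of Lemma \ref{afford}, disposing of three or more primes with an informal ``we always lose more than we gain back'' remark. You instead fix a single prime $p$, write $|G| = p^{\alpha}m_p$, and pair the $p^{\alpha}-1$ nontrivial elements of the Sylow $p$-subgroup against the $m_p-1$ nontrivial elements of its complementary Hall subgroup, getting $(p^{\alpha}-1)(m_p-1)$ non-edges; the needed bound $(p^{\alpha}-1)(m_p-1)\geq n = (|G|-1)/2$ reduces exactly to $(p^{\alpha}-2)(m_p-2)\geq 1$, which holds since $p^{\alpha}$ and $m_p$ are odd and greater than $1$, hence both at least $3$. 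Both arguments rest on the same two pillars --- in a cyclic group adjacency in $\mathscr{G}(G)$ is divisibility of orders, and Lemma \ref{afford}'s allowance of only $n-1$ missing edges --- but your choice of the two incomparable families buys uniformity: a single clean inequality covers every factorization, including the unbalanced cases such as $3^{\alpha}\cdot 5$ that you rightly flag as problematic for counts of the form $(p-1)(q-1)$, whereas the paper's displayed computation really only handles two primes carefully and leaves the rest to a heuristic.
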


\begin{proof}
    Let $G$ be a cyclic group of odd order, $|G| = 2n+1$, and suppose $|G|$ is not a prime power. Since $|G|$ has at least two prime divisors, write $|G| = p^rq^sc$, where $p, q$ are distinct odd primes, $r, s, c \in \mathbb{N}$, and $\gcd(pq, c)=1$. 

    Now consider 
    \[A = H_{p^rc} \setminus H_c \text{ and } B = H_{q^sc} \setminus H_c\]
    where $H_{p^rc}$, $H_{q^sc}$, and $H_c$ are the unique subgroups of order $p^rc$, $q^sc$, and $c$ respectively. 
    It follows that $|A| = p^rc-c = (p^r - 1)c$ and $|B| = (q^s-1)c$. 

    Now in $\mathscr{G}(G)$, no element of $A$ and $B$ are adjacent since every element of $A$ has order divisible by $p$ but not $q$ and every element of $B$ has order divisible by $q$ but not $p$. Thus $\mathscr{G}(G)$ is missing at least $|A||B| = (p^r-1)(q^s-1)c^2$ edges. 

    Since $|G| = 2n+1 = p^rq^sc$, we have $n =  \frac{p^rq^sc-1}{2}$. So in order to not be overfull, $\mathscr{G}(G)$ must be missing at least $\frac{p^rq^sc-3}{2}$ edges. Since $p^r \geq 3$, $q^s \geq 5$, and $c\geq 1$, we have:
    \begin{align*}
        2(p^r-1)(q^s-1)c^2 - (p^rq^sc-3) &\geq 2(p^r-1)(q^s-1)c-p^rq^sc+3 \\
        & = c(2(p^r-1)(q^s-1)-p^rq^s)+3 \\
        & = c((p^r-2)(q^s-2)-2)+3 \\
        & \geq c+3 >0 & \text{since } p^r-2 \geq 1 \text{ and } q^s-2 \geq 3
    \end{align*}
    Thus \[(p^r-1)(q^s-1)c^2 > \frac{p^rq^sc-3}{2} = n-1.\] 
    Hence $\mathscr{G}(G)$ is missing more than $n-1$ edges and, by Lemma \ref{afford}, is not overfull. 
\end{proof}

\bigskip

\begin{theorem}\label{ovf iff}
    Let $G$ be a finite group, then $\mathscr{G}(G)$ is overfull if and only if $G$ is cyclic of odd prime-power order. 
\end{theorem}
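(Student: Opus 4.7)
The theorem is essentially an assembly result: each of the previous propositions handles one piece of the case analysis, so my plan is to reduce to them cleanly.

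The forward implication ($G$ cyclic of odd prime-power order $\Rightarrow$ $\mathscr{G}(G)$ overfull) is exactly Proposition \ref{odd cyclic is overfull}, so I would dispose of it in one sentence. For the converse, I would prove the contrapositive and split on the parity of $|G|$ and on the structure of $G$.

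The plan is to handle the contrapositive via four mutually exclusive cases. First, if $|G|$ is even, then Proposition \ref{even is not overfull} immediately gives that $\mathscr{G}(G)$ is not overfull, so from here on I may assume $|G|$ is odd. Second, if $G$ is cyclic of odd order that is not a prime power, Lemma \ref{lemma: cyclic not prime not ovf} finishes the case. Third, if $G$ is cyclic of odd prime-power order, then $\mathscr{G}(G)$ is in fact overfull, and this is the only possibility consistent with $\mathscr{G}(G)$ being overfull among cyclic groups of odd order. The remaining case is $G$ non-cyclic of odd order.

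The main step is this last case, and the key observation is to invoke Cameron's Proposition \ref{non trivial center implications}. Let $S$ be the set of vertices joined to all others in $\mathscr{G}(G)$. If $|S|>1$, then $G$ must be cyclic or generalised quaternion; but $G$ is non-cyclic by assumption, and generalised quaternion groups have even order, so both possibilities are ruled out. Hence $|S|\leq 1$, and since the identity is always joined to all other vertices we have $S=\{e\}$. Now Proposition \ref{only id join so not overfull} yields that $\mathscr{G}(G)$ is not overfull. Combining the four cases completes the contrapositive, and together with Proposition \ref{odd cyclic is overfull} the theorem follows.

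I do not anticipate any genuine obstacle here, since every case has already been isolated as a separate lemma or proposition; the only subtlety is remembering that ``non-cyclic of odd order'' rules out the generalised quaternion branch of Proposition \ref{non trivial center implications}, which is why restricting to odd order before appealing to Cameron's theorem is the right order of operations.
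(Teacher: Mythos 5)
Your proposal is correct and follows essentially the same route as the paper: both directions are assembled from Propositions \ref{odd cyclic is overfull}, \ref{even is not overfull}, \ref{only id join so not overfull}, Lemma \ref{lemma: cyclic not prime not ovf}, and Cameron's Proposition \ref{non trivial center implications}, with the generalised quaternion case eliminated by parity. Your version merely phrases the converse as an explicit contrapositive case split, whereas the paper argues directly, so there is no substantive difference.
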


\begin{proof}
    For the right to left implication, Proposition \ref{odd cyclic is overfull} gives the right to left implication. 
    For the other direction, note that, as shown in Proposition \ref{even is not overfull}, if $|G|$ is even, then $\mathscr{G}(G)$ cannot be overfull. Also note that if only the identity is adjacent to all other vertices, then $\mathscr{G}(G)$ is not overfull (Proposition \ref{only id join so not overfull}). It follows from Proposition \ref{non trivial center implications} that, to be overfull, $G$ must be either a cyclic group, or the generalized quaternion. 

    Now the generalized quaternion group has even order so it cannot be overfull. So $G$ must be cyclic. Yet, as shown in Lemma \ref{lemma: cyclic not prime not ovf}, if $G$ is a cyclic group of odd order which is not a prime power, then $G$ is not overfull. Thus $G$ must be a cyclic group of odd prime power order and this completes our proof. 
\end{proof}

\bigskip

\begin{theorem}
    Let $G$ be a finite group, then $\mathscr{G}(G)$ is class 2 if and only if it is a cyclic group of odd prime-power order. 
\end{theorem}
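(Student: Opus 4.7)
The plan is to deduce this final theorem by combining Theorem \ref{ovf iff} (which characterizes overfull power graphs) with Theorem \ref{overfull proven part} (the resolved range of the Overfull Conjecture). The backward direction is essentially free: if $G$ is cyclic of odd prime-power order, then $\mathscr{G}(G)$ is overfull by Theorem \ref{ovf iff}, and every overfull graph is class 2. So the substantive work lies in the forward direction.

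For the forward direction I would argue by contrapositive, showing that whenever $G$ is not cyclic of odd prime-power order, $\mathscr{G}(G)$ is class 1. The crucial structural fact is that the identity of $G$ is a power of every group element, and therefore is adjacent to every other vertex in $\mathscr{G}(G)$. This immediately gives $\Delta(\mathscr{G}(G)) = |G|-1 = |V(\mathscr{G}(G))|-1$, so in particular $\Delta(\mathscr{G}(G)) \geq |V(\mathscr{G}(G))| - 3$ and Theorem \ref{overfull proven part} applies. It thus suffices to verify that $\mathscr{G}(G)$ contains no overfull subgraph $H$ with $\Delta(H) = |G|-1$.

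The key observation for this verification is the following: any subgraph $H$ of $\mathscr{G}(G)$ with $\Delta(H) = |G|-1$ must contain some vertex with $|G|-1$ neighbors \emph{within} $H$, which forces $|V(H)| \geq |G|$ and hence $V(H) = V(\mathscr{G}(G))$, i.e.\ $H$ is spanning. Since $E(H) \subseteq E(\mathscr{G}(G))$, $V(H) = V(\mathscr{G}(G))$, and both graphs share the same maximum degree $|G|-1$, overfullness of $H$ would propagate to overfullness of $\mathscr{G}(G)$ itself. But by Theorem \ref{ovf iff}, the hypothesis on $G$ rules out $\mathscr{G}(G)$ being overfull, so no such $H$ can exist. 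Applying Theorem \ref{overfull proven part} then delivers that $\mathscr{G}(G)$ is class 1.

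I do not anticipate any genuine obstacle here: the nontrivial content is packaged into the two cited theorems. The only subtle point is noticing that the constraint $\Delta(H)=|G|-1$ automatically makes $H$ span all of $V(\mathscr{G}(G))$, which is what lets overfullness transfer upward from any candidate subgraph to the whole power graph and thereby reduces the forward direction of the theorem to Theorem \ref{ovf iff}.
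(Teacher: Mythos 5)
Your proposal is correct and follows essentially the same route as the paper: note that the identity forces $\Delta(\mathscr{G}(G)) = |V(\mathscr{G}(G))|-1$, apply Theorem \ref{overfull proven part}, and reduce the existence of an overfull subgraph of maximum degree $|G|-1$ to the overfullness of $\mathscr{G}(G)$ itself, which Theorem \ref{ovf iff} characterizes. If anything, your handling of spanning proper subgraphs (observing that overfullness propagates upward because $|E(H)| \leq |E(\mathscr{G}(G))|$ with the same vertex set and maximum degree) is slightly more careful than the paper's phrasing, which dismisses all proper subgraphs at once.
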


\begin{proof}
    For a group, $\Delta(\mathscr{G}(G))$ is always equal to $|V(\mathscr{G}(G))| - 1$ since at least the identity is adjacent to all others. Therefore, by Theorem \ref{overfull proven part}, $\mathscr{G}(G)$ is Class $2$ if and only if $\mathscr{G}(G)$ contains an overfull subgraph $H$ with $\Delta(H) = \Delta(\mathscr{G}(G))$. So it suffices to show that only $\mathscr{G}(C_{p^\alpha})$ contains an overfull subgraph. 

    Now, if $\mathscr{G}(G)$ had an overfull subgraph say $H$, since $\Delta(\mathscr{G}(G))$ is the highest possible degree attainable in any graph of that order, i.e., $n-1$, then no proper subgraph can match it since they wouldn't have enough vertices. So the overfull subgraph would have to be $\mathscr{G}(G)$ itself or a spanning subgraph. We showed in Theorem \ref{ovf iff}, that $\mathscr{G}(G)$ is overfull if and only if $G$ is cyclic of odd prime power order. If $H  = \mathscr{G}(G)$ we are done, if $H$ is a spanning subgraph then if $\mathscr{G}(G)$ is overfull we can disregard $H$, and if it is not then nor is $H$ by definition. 
    Thus $\mathscr{G}(G)$ is Class 2 if and only if $G$ is cyclic of odd prime power order.
\end{proof}

We remark that, as explained in the proof, for a finite group $G$, $\mathscr{G}(G)$ is Class 2 if and only if it is overfull.

\bigskip 

\begin{example}{\textbf{A coloring of $\mathscr{G}(C_{15})$}}

The following table gives a $14$-coloring of $\mathscr{G}(C_{15})$. The upper row represents the colors attributed to each edge, and each edge $(c^i, c^j)$ is simply denoted $(i, j)$. We used the coloring used by Behzad, Chartrand, and Cooper in \cite{coloring}: Label the vertices of $G$ $1, 2, \ldots, n$, and let
\[
S_p = \{ (p-q,\, p+q) \mid q = 1,\, 2,\, \ldots,\, (n-1)/2\}
\]
for $p = 1,\, 2,\, \ldots,\, n$, where for the edge $(p-q,\, p+q)$, each of the numbers $p-q$ and $p+q$ is expressed as one of the numbers $1,\, 2,\, \ldots,\, n$ modulo $n$. But we limited ourselves to $14$ colors and, once no more colors were available, laboriously manually moved around some edges to exchange colors when possible, and eventually free up a color for each edge that needed one.

\bigskip

\begin{table}[ht]
\caption{Coloring table of the $97$ edges of $\mathscr{G}(C_{15})$ in $14$ colors. }
{\small
\centering
\hspace*{-0.9cm} 
\begin{tabular}[t]{|C|C|C|C|C|C|C|}
\hline
 1& 2& 3& 4& 5& 6& 7 \\ \hline
 (15, 2)&(1, 3) & (2, 4) & (2, 6)& (4,6)& (5,7)& (6,8)\\ \hline
 (14, 3)&(15, 4)& (1, 5)& (1, 7)& (3,7)& (4,8)& (4,10) \\ \hline
 (13, 4)&(14, 5)& (15, 6)& (15,8)& (2,8)& (3,9)& (3,11) \\ \hline
 (11, 6)&(12, 7)& (14, 7)& (14,9)& (1,9)& (2,10)& (2, 12)\\ \hline
 (10, 7)&(11, 8)& (13, 8)& (13,10)& (15,10)& (1,11)& (1,13)\\ \hline
 (9, 8)&(2, 13)& (12, 9)& (3,12)& (14,11)& (15,12)& (15,14) \\ \hline
 (1, 12)&      & (11, 10)& (11,4)& (13,12)& (14,13)& (7,9) \\ \hline
\end{tabular} 

\hspace*{-0.9cm}
\begin{tabular}[t]{|C|C|C|C|C|C|C|}
\hline
 8&9 & 10& 11 & 12 & 13 & 14 \\ \hline
(5, 11)& (8,10) &(9,11) &(9,13) &(11,13) &(11,15) & (10,1) \\ \hline
(4,12)& (7,11)& (8,12)& (8,14)&(10,14) &(9,2) &(13,15) \\ \hline
(3,13)& (6,12)& (7,13)&(7,15) &(9, 15) &(8,3) &(11,2) \\ \hline
 (2,14)& (5,13)& (6,14)&(6,1) &(8,1) &(7,4) &(9,4) \\ \hline
 (1,15)& (4,14)& (5,15)&(5,2) &(7,2) &(5,10) &(8,5) \\ \hline
 (6,9)& (3,15)& (4,1)&(4,3) & (6,3)&(13,6) & (7,6) \\ \hline
(7,8)& (2,1)& (3,2)& (11,12)& (5,4)&(1,14) & (14,12) \\ \hline
\end{tabular} 

}
\end{table}

\end{example}

\bigskip

\begin{remark}

The proof of Theorem \ref{overfull proven part} for $\Delta(\Gamma) = |V(\Gamma)| - 1$, the case applicable to power graphs of finite groups, was done in \cite{Plantholt-1}. It is nevertheless not obvious to the author how it handles graphs which are not overfull but whose edges that were removed are not independent. The proof starts by considering the vertices of the complement of $\Gamma$, which have at least one edge incident to them, and constructing a bipartite graph with these as one of the parts. For $V(\Gamma) = 2n+1$ their bipartite graph is $K_{n+1, n}$. This is not possible for $\mathscr{G}(C_{15})$ since there are only $6$ vertices with edges incident to them in $\overline{\mathscr{G}(C_{15})}$: $c^5, c^{10}, c^3, c^6, c^9, c^{12}$. For this reason, we find it better to rely on the construction of \cite{Rhee}. Rhee's proof brings together three key points. He first shows that if two graphs have same cardinality edge sets, we can derive one from the other by relying on the edges of its complement and exchanging them with existing edges. He then shows that $K_{2n+1}-n$, the complete graph $K_{2n+1}$ deprived of $n$ edges, is $2n$ colorable if the edges removed are independent. He finally shows that for two graphs $K_{2n+1}-n$, if one is derivable from the other using the above technique, then they have the same edge-chromatic number. Bringing these together gives the desired result. 
\end{remark}
Here is a short example: 
Assume we have $\mathscr{G}(C_{15}) - c^{10}c^{5} + c^6 c^{10}$, with the same coloring as $\mathscr{G}(C_{15})$ in our previous coloring, but where the edge ${\{c^{10}, c^{5}\}}$ does not exist and where $\{c^6, c^{10}\}$ is colored by the color $2$. Following the construction of \cite{Rhee}, we observe that $\chi'(c^{10} c^{11}) = 2$ and that $c^5$ is not adjacent to any edge colored $7$. We follow the color-alternating path of colors $2$ and $7$ starting from $c^5$ to obtain the following vertex sequence: $c^5c^{14}c^{15}c^4c^{10}$ where $\chi'(c^5c^{14}) = 2$, $\chi'(c^{14}c^{15}) = 7$, $\chi'(c^{15}c^{4}) = 2$, $\chi'(c^4c^{10}) = 7$ and $c^{10}$ is not adjacent to an edge colored $2$. We can then invert the color of each edge in that path to make $\chi'(c^{10}c^5) = 2$ with no conflict.

\bigskip

\begin{example}{\textbf{A coloring of $\mathscr{G}(C_{15})$ using Rhee's construction}}

First, using the coloring of \cite{coloring} on $K_{15}$ up to 14 colors gives the following coloring table: 

\begin{table}[ht]
\caption{Coloring table of $K_{15}$ up to what $14$ colors allow. }
{\small
\centering
\hspace*{-0.9cm} 
\begin{tabular}[t]{|C|C|C|C|C|C|C|}
\hline
 1& 2& 3& 4& 5& 6&7  \\ \hline
 (15, 2)&(1, 3) & (2, 4) & (3,5)& (4,6)& (5,7)& (6,8)\\ \hline
 
 (14, 3)&(15, 4)& (1, 5)& (2,6)& (3,7)& (4,8)& (5,9) \\ \hline
 
 (13, 4)&(14, 5)& (15, 6)& (1,7)& (2,8)& (3,9)& (4,10) \\ \hline
 
 (12, 5)&(13,6)& (14, 7)& (15,8)& (1,9)& (2,10)& (3,11) \\ \hline
 
 (11,6)&(12,7)& (13, 8)& (14,9)& (15,10)& (1,11)& (2,12) \\ \hline
 
 (10, 7)&(11,8)& (12, 9)& (13,10)& (14,11)& (15,12)& (1,13) \\ \hline
 
 (9,8)& (10,9) & (11, 10)& (12,11)& (13,12)& (14,13)& (15,14) \\ \hline
\end{tabular}

\hspace*{-0.9cm}
\begin{tabular}[t]{|C|C|C|C|C|C|C|}
\hline
 8 &9 & 10& 11&12 &13 &14 \\ \hline
(7,9) &(8,10) &(9,11) &(10,12) &(11,13) &(12,14) & (13,15)\\ \hline
 
(6,10)& (7,11)& (8,12)& (9,13)&(10,14) &(11,15) &(12,1) \\ \hline
 
(5,11)& (6,12)& (7,13)&(8,14) &(9, 15) &(10,1) &(11,2) \\ \hline
 
 (4,12)& (5,13)& (6,14)&(7,15) &(8,1) &(9,2) &(10,3) \\ \hline
(3,13)& (4,14)& (5,15)&(6,1) &(7,2) &(8,3) &(9,4) \\ \hline
 
(2,14)& (3,15)& (4,1)&(5,2) & (6,3)&(7,4) & (8,5) \\ \hline
 
(1,15)& (2,1)& (3,2)& (4,3)& (5,4) & (6,5) & (7,6) \\ \hline
\end{tabular}

}
\end{table}

\pagebreak

The set of uncolored edges (using the same notational shortcut as before) is $\mathcal{M} := \{(14,1), (13,2), (12,3), (11,4), (10,5), (9,6), (8,7) \}$. 

Clearly, $K_{15} - \mathcal{M}$ is $14$-colorable - via the table above. But note that our coloring attributes colors to edges that exist in $K_{15}$ but are missing from $\mathscr{G}(C_{15})$, namely $(3,5), (3,10), (6,5), (6,10), (9,5), (9, 10), (12,5), (12,10)$. Call this set of edges $\mathcal{A}$. Following Rhee's construction, we will replace each edge from $\mathcal{A}$ by an edge from $\mathcal{M}$, until $\mathcal{M}$ is empty, in a way that keeps our graph $14$-colorable. 

An example case of the construction of Rhee was given above. For our specific case, starting with Table 2 and changing edges and colors between $\mathcal{A}$ and $\mathcal{M}$, there are two cases even before entering the construction. Either we are replacing an edge of $\mathcal{A}$ with an edge from $\mathcal{M}$ which is its ``conjugate" (following the terminology of Rhee), i.e., an edge missing from one of the two vertices incident to the existing edge. In this case we directly follow the construction. Otherwise, we need some intermediate steps until we find ourselves in a position where we are replacing an edge with one of its conjugates. 

For $\mathscr{G}(C_{15})$ a first step in the process could be to replace $(5, 6)$ - which shouldn't exist in $\mathscr{G}(C_{15})$, with $(5, 10)$ which should but is not in our current graph. To do so, similarly as before, consider the $\{13, 10\}$-color-alternating path starting at $10$ ($c^{10}$). It consists of vertices $10, 1, 4, 7, 13$. We can then invert each color in that path to make it a $\{10, 13\}$-color-alternating path. So $10$ does not have an edge colored $13$ incident to it, hence we can delete the edge $(5, 6)$, which was colored $13$, and create the edge $(5, 10)$ with that color. The resulting coloring table shows that the construction did not create any conflicts.

\begin{table}[H]
\caption{Coloring table after removing $(5, 6)$ and adding $(5, 10)$. }
{\small
\centering
\hspace*{-0.9cm} 
\begin{tabular}[t]{|C|C|C|C|C|C|C|}
\hline
 1& 2& 3& 4& 5& 6&7  \\ \hline
 (15, 2)&(1, 3) & (2, 4) & (3,5)& (4,6)& (5,7)& (6,8)\\ \hline
 
 (14, 3)&(15, 4)& (1, 5)& (2,6)& (3,7)& (4,8)& (5,9)\\ \hline
 
 (13, 4)&(14, 5)& (15, 6)& (1,7)& (2,8)& (3,9)& (4,10)\\ \hline
 
 (12, 5)&(13,6)& (14, 7)& (15,8)& (1,9)& (2,10)& (3,11) \\ \hline
 
 (11,6)&(12,7)& (13, 8)& (14,9)& (15,10)& (1,11)& (2,12) \\ \hline
 
 (10, 7)&(11,8)& (12, 9)& (13,10)& (14,11)& (15,12)& (1,13) \\ \hline
 
 (9,8)& (10,9) & (11, 10)& (12,11)& (13,12)& (14,13)& (15,14) \\ \hline
\end{tabular} 

\hspace*{-0.9cm} 
\begin{tabular}[t]{|C|C|C|C|C|C|C|}
\hline
8 &9 & 10& 11&12 &13 &14 \\ \hline
  (7,9) &(8,10) &(9,11) &(10,12) &(11,13) &(12,14) & (13,15)\\ \hline
 
  (6,10)& (7,11)& (8,12)& (9,13)&(10,14) &(11,15) &(12,1) \\ \hline
 
  (5,11)& (6,12)& \textcolor{blue}{(10, 1)} &(8,14) &(9, 15) & \textcolor{blue}{(5, 10)} &(11,2) \\ \hline
 
  (4,12)& (5,13)& (6,14)&(7,15) &(8,1) &(9,2) &(10,3) \\ \hline
 
  (3,13)& (4,14)& (5,15)&(6,1) &(7,2) &(8,3) &(9,4) \\ \hline
 
  (2,14)& (3,15)& \textcolor{blue}{(4, 7)} &(5,2) & (6,3)& \textcolor{blue}{(1, 4)} & (8,5) \\ \hline
 
  (1,15)& (2,1)& (3,2)& (4,3)& (5,4) & \textcolor{blue}{(7, 13)} & (7,6) \\ \hline
\end{tabular} 

}
\end{table}

\medskip

Now, another edge from $\mathcal{M}$ we want to add to our graph is, for example, $(7, 8)$, but we may not exchange it with any of its conjugates. In such cases, one is required to take multiple steps, such as removing $(5, 8)$ to add $(8, 7)$, and then adding back $(5, 8)$ instead of $(3, 5)$ which is one of the edges we do not need. These rely on a different case of the construction, one in which after changing the color of the alternating path, we need to handle a conflict that arises in a cycle. We shall not go further here as the color alternating path involved in exchanging $(5, 8)$ and $(8, 7)$ contains $14$ edges which will hence all change color.

\end{example}

\section*{Acknowledgments}
The author greatly thanks Dominique Perrin for his help in reflecting on the problems, questioning the literature, and writing and structuring this paper. The author also thanks Peter Cameron for allowing him to discover the topic of graphs defined on groups and the problems around them. Finally, the author thanks the reviewers for their visual, typographical, and mathematical recommendations and corrections which helped improve the paper greatly.

\printbibliography

\end{document}